\documentclass{amsart}
\usepackage{amsmath,amssymb,amsfonts}
\usepackage{color}
\usepackage[colorlinks=true,linkcolor=blue,urlcolor=blue,citecolor   = red]{hyperref}

\newtheorem{theorem}{Theorem}[section]
\newtheorem{corollary}[theorem]{Corollary}

\newtheorem{proposition}[theorem]{Proposition}
\newtheorem{remark}[theorem]{Remark}

\begin{document}
\title{Weak compactness of almost limited operators}
\author[A. Elbour]{Aziz Elbour}
\address[A. Elbour]{Universit\'{e} Moulay Isma\"{\i}l, Facult\'{e} des
Sciences et Techniques, D\'{e}partement de Math\'{e}\-matiques, B.P. 509,
Errachidia, Morocco.}
\email{azizelbour@hotmail.com}
\author[N. Machrafi]{Nabil Machrafi}
\address[N. Machrafi]{Universit\'{e} Ibn Tofail, Facult\'{e} des Sciences, D%
\'{e}partement de Math\'{e}\-matiques, B.P. 133, K\'{e}nitra, Morocco.}
\email{nmachrafi@gmail.com}
\author[M. Moussa]{Mohammed Moussa}
\address[M. Moussa]{Universit\'{e} Ibn Tofail, Facult\'{e} des Sciences, D%
\'{e}partement de Math\'{e}\-matiques, B.P. 133, K\'{e}nitra, Morocco.}
\email{mohammed.moussa09@gmail.com}

\begin{abstract}
The paper is devoted to the relationship between almost limited operators
and weakly compacts operators. We show that if $F$ is a $\sigma $-Dedekind
complete Banach lattice then, every almost limited operator $T:E\rightarrow
F $ is weakly compact if and only if $E$ is reflexive or the norm of $F$ is
order continuous. Also, we show that if $E$ is a $\sigma $-Dedekind complete
Banach lattice then the square of every positive almost limited operator $%
T:E\rightarrow E$ is weakly compact if and only if the norm of $E$ is order
continuous.
\end{abstract}

\subjclass[2010]{Primary 46B42; Secondary 46B50, 47B65}
\keywords{almost limited operator, weakly compact operator, positive dual
Schur property, order continuous norm, almost limited set.}
\maketitle

\section{Introduction}

Throughout this paper $X,$ $Y$ will denote real Banach spaces, and $E,\,F$
will denote real Banach lattices. $B_{X}$ is the closed unit ball of $X$ and 
$B_{E}^{+}:=B_{E}\cap E^{+}$ is the positive part of $B_{E}$\textbf{.} We
will use the term operator $T:X\rightarrow Y$ between two Banach spaces to
mean a bounded linear mapping. We refer to \cite{AB3, MN} for unexplained
terminology of the Banach lattice theory and positive operators.

Let us recall that a norm bounded set $A$ in a Banach space $X$ is called 
\emph{limited}, if every weak$^{\ast }$ null sequence $\left( f_{n}\right) $
in $X^{\ast }$ converges uniformly to zero on $A$, that is, $\sup_{x\in
A}\left\vert f_{n}\left( x\right) \right\vert \rightarrow 0$. An operator $%
T:X\rightarrow Y$ is said to be \emph{limited} whenever $T\left(
B_{X}\right) $ is a limited set in $Y$, equivalently, whenever $\left\Vert
T^{\ast }\left( f_{n}\right) \right\Vert \rightarrow 0$ for every weak$%
^{\ast }$ null sequence $\left( f_{n}\right) \subset Y^{\ast }$.

Recently, the authors of \cite{chen} considered the disjoint version of
limited sets by introducing the class of almost limited sets in Banach
lattices. From \cite{chen} a norm bounded subset $A$ of a Banach lattice $E$
is said to be \emph{almost limited}, if every disjoint weak$^{\ast }$ null
sequence $(f_{n})$ in $E^{\ast }$ converges uniformly to zero on $A$.

From \cite{Mach}, an operator $T:X\rightarrow E$ is called \emph{almost
limited} if $T\left( B_{X}\right) $ is an almost limited set in $E$,
equivalently, $\left\Vert T^{\ast }\left( f_{n}\right) \right\Vert
\rightarrow 0$ for every disjoint weak$^{\ast }$ null sequence $\left(
f_{n}\right) \subset E^{\ast }$. Note that an almost limited operator need
not be weakly compact. In fact, the identity operator of the Banach lattice $%
\ell ^{\infty }$ is almost limited but it is not weakly compact.

In this paper, we characterize pairs of Banach lattices $E$, $F$ for which
every almost limited operator $T:E\rightarrow F$ is weakly compact. More
precisely, we will prove that if $F$ is a $\sigma $-Dedekind complete Banach
lattice then, every almost limited operator $T:E\rightarrow F$ is weakly
compact if and only if $E$ is reflexive or the norm of $F$ is order
continuous (\autoref{T2 ess}). Next, we will prove that if $E$ is a $\sigma $%
-Dedekind complete Banach lattice then the square of every positive almost
limited operator $T:E\rightarrow E$ is weakly compact if and only if the
norm of $E$ is order continuous (\autoref{T3}). As consequences, we will
give some interesting results.

\section{Main results}

Let us recall that a Banach lattice $E$ is said to have the dual positive
Schur property if $\left\Vert f_{n}\right\Vert \rightarrow 0$ for every weak$%
^{\ast }$ null sequence $\left( f_{n}\right) \subset \left( E^{\ast }\right)
^{+}$, equivalently, $\left\Vert f_{n}\right\Vert \rightarrow 0$ for every
weak$^{\ast }$ null sequence $\left( f_{n}\right) \subset \left( E^{\ast
}\right) ^{+}$ consisting of pairwise disjoint terms (Proposition 2.3 of 
\cite{WN2012}). A Banach lattice $E$ has the property $(\mathrm{d})$
whenever $\left\vert f_{n}\right\vert \wedge \left\vert f_{m}\right\vert =0$
and $f_{n}\overset{w^{\ast }}{\rightarrow }0$ in $E^{\ast }$ imply $%
\left\vert f_{n}\right\vert \overset{w^{\ast }}{\rightarrow }0$. It should
be noted, by Proposition 1.4 of \cite{WN2012}, that every $\sigma $-Dedekind
complete Banach lattice has the property $(\mathrm{d})$ but the converse is
not true in general. In fact, the Banach lattice $\ell ^{\infty }/c_{0}$\
has the property $(\mathrm{d})$ but it is not $\sigma $-Dedekind complete 
\cite[Remark 1.5]{WN2012}.

Our first result shows that we can restrict sequences appearing in the
definition of almost limited operator $T:X\rightarrow E$ to positive
disjoint sequences if the Banach lattice $E$ has the property $(\mathrm{d})$.

\begin{proposition}
\label{P1}An operator $T:X\rightarrow E$ from a Banach space $X$ into a
Banach lattice $E$ with the property $(\mathrm{d})$, is almost limited if
and only if $\left\Vert T^{\ast }\left( f_{n}\right) \right\Vert \rightarrow
0$ for every weak* null sequence $\left( f_{n}\right) $ in $E^{\ast }$
consisting of positive and pairwise disjoint elements.
\end{proposition}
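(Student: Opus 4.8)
The forward implication requires no work: a positive, pairwise disjoint weak$^{\ast}$ null sequence is in particular a disjoint weak$^{\ast}$ null sequence, so if $T$ is almost limited then $\left\Vert T^{\ast }\left( f_{n}\right) \right\Vert \rightarrow 0$ holds a fortiori for such sequences. All the content lies in the converse, and the plan is to reduce an arbitrary disjoint sequence to the positive case via the modulus decomposition.

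Assuming that $\left\Vert T^{\ast }\left( f_{n}\right) \right\Vert \rightarrow 0$ for every positive pairwise disjoint weak$^{\ast}$ null sequence, I would start from an arbitrary disjoint weak$^{\ast}$ null sequence $\left( g_{n}\right) $ in $E^{\ast }$ and split each term as $g_{n}=g_{n}^{+}-g_{n}^{-}$. First I would record that both $\left( g_{n}^{+}\right) $ and $\left( g_{n}^{-}\right) $ are again pairwise disjoint: from $\left\vert g_{n}\right\vert \wedge \left\vert g_{m}\right\vert =0$ for $n\neq m$ and the monotonicity of the lattice operations one gets $g_{n}^{+}\wedge g_{m}^{+}\leq \left\vert g_{n}\right\vert \wedge \left\vert g_{m}\right\vert =0$, and likewise for the negative parts.

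The crucial step, and the only point where property $(\mathrm{d})$ enters, is to upgrade the weak$^{\ast}$ convergence of $\left( g_{n}\right) $ to that of the moduli. Since $\left( g_{n}\right) $ is pairwise disjoint and $g_{n}\overset{w^{\ast }}{\rightarrow }0$, property $(\mathrm{d})$ gives $\left\vert g_{n}\right\vert \overset{w^{\ast }}{\rightarrow }0$. Combining this with $g_{n}\overset{w^{\ast }}{\rightarrow }0$ and the identities $g_{n}^{+}=\tfrac{1}{2}\left( \left\vert g_{n}\right\vert +g_{n}\right) $ and $g_{n}^{-}=\tfrac{1}{2}\left( \left\vert g_{n}\right\vert -g_{n}\right) $ yields $g_{n}^{+}\overset{w^{\ast }}{\rightarrow }0$ and $g_{n}^{-}\overset{w^{\ast }}{\rightarrow }0$. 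Thus $\left( g_{n}^{+}\right) $ and $\left( g_{n}^{-}\right) $ are both positive, pairwise disjoint, weak$^{\ast}$ null sequences.

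Finally I would apply the hypothesis to each of them to get $\left\Vert T^{\ast }\left( g_{n}^{+}\right) \right\Vert \rightarrow 0$ and $\left\Vert T^{\ast }\left( g_{n}^{-}\right) \right\Vert \rightarrow 0$, and conclude by linearity of $T^{\ast }$ and the triangle inequality that $\left\Vert T^{\ast }\left( g_{n}\right) \right\Vert \leq \left\Vert T^{\ast }\left( g_{n}^{+}\right) \right\Vert +\left\Vert T^{\ast }\left( g_{n}^{-}\right) \right\Vert \rightarrow 0$. As $\left( g_{n}\right) $ was an arbitrary disjoint weak$^{\ast}$ null sequence, $T$ is almost limited. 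I do not expect a genuine obstacle: the decomposition is routine, and property $(\mathrm{d})$ does precisely the one nontrivial job of transferring weak$^{\ast}$ nullity from $g_{n}$ to $\left\vert g_{n}\right\vert $, which is exactly the implication that can fail in an arbitrary Banach lattice.
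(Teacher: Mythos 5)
Your proposal is correct and follows essentially the same route as the paper: use property $(\mathrm{d})$ to get $\left\vert g_{n}\right\vert \overset{w^{\ast }}{\rightarrow }0$, decompose into positive and negative parts (which are positive, pairwise disjoint, weak$^{\ast}$ null), apply the hypothesis to each, and finish by the triangle inequality. The only cosmetic difference is that you derive weak$^{\ast}$ nullity of $g_{n}^{\pm}$ from the identities $g_{n}^{\pm}=\tfrac{1}{2}\left( \left\vert g_{n}\right\vert \pm g_{n}\right) $, while the paper squeezes via $0\leq g_{n}^{\pm}\leq \left\vert g_{n}\right\vert $; both are immediate.
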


\begin{proof}
The \textquotedblleft only if\textquotedblright\ part is trivial. For the
\textquotedblleft if\textquotedblright\ part, let $\left( f_{n}\right)
\subset E^{\ast }$ be a disjoint weak* null sequence. As $E$ has the
property $(\mathrm{d})$, $\left\vert f_{n}\right\vert \overset{w^{\ast }}{%
\rightarrow }0$. Using the inequalities $0\leq f_{n}^{+}\leq \left\vert
f_{n}\right\vert $ and $0\leq f_{n}^{-}\leq \left\vert f_{n}\right\vert $,
we see that $\left( f_{n}^{+}\right) $ and $\left( f_{n}^{-}\right) $ are
disjoint weak$^{\star }$ null sequences of $E^{\ast }$. So, from our
hypothesis we see that $\left\Vert T^{\ast }\left( f_{n}^{+}\right)
\right\Vert \rightarrow 0$ and $\left\Vert T^{\ast }\left( f_{n}^{-}\right)
\right\Vert \rightarrow 0$. This implies that $\left\Vert T^{\ast }\left(
f_{n}\right) \right\Vert \rightarrow 0$, and hence $T$ is almost limited.
\end{proof}

The next result follows immediately from Proposition 2.3 of \cite{WN2012}
combined with Proposition \ref{P1}.

\begin{corollary}
A Banach lattice $E$ with the property $(\mathrm{d})$ has the dual positive
Schur property if and only if the identity operator on $E$ is almost limited.
\end{corollary}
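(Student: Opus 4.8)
The plan is to specialize Proposition \ref{P1} to the identity operator and then match the resulting criterion with the disjoint reformulation of the dual positive Schur property recorded at the start of the section. First I would take $X=E$ and $T=I_{E}$, the identity operator on $E$. Its adjoint is the identity $I_{E^{\ast }}$ on $E^{\ast }$, so that $\left\Vert I_{E}^{\ast }\left( f_{n}\right) \right\Vert =\left\Vert f_{n}\right\Vert $ for every sequence $\left( f_{n}\right) \subset E^{\ast }$. Consequently the defining condition $\left\Vert T^{\ast }\left( f_{n}\right) \right\Vert \rightarrow 0$ appearing in Proposition \ref{P1} collapses to the plain statement $\left\Vert f_{n}\right\Vert \rightarrow 0$.

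Since $E$ is assumed to have the property $(\mathrm{d})$, Proposition \ref{P1} applies directly and shows that $I_{E}$ is almost limited if and only if $\left\Vert f_{n}\right\Vert \rightarrow 0$ for every weak$^{\ast }$ null sequence $\left( f_{n}\right) $ in $E^{\ast }$ consisting of positive and pairwise disjoint elements. This is exactly half of the equivalence; the remaining step is purely a matter of recognizing this condition.

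To close the argument I would invoke the equivalent formulation of the dual positive Schur property recalled in the preamble (Proposition 2.3 of \cite{WN2012}), namely that $E$ has the dual positive Schur property precisely when $\left\Vert f_{n}\right\Vert \rightarrow 0$ for every weak$^{\ast }$ null sequence $\left( f_{n}\right) \subset \left( E^{\ast }\right) ^{+}$ consisting of pairwise disjoint terms. The criterion produced by Proposition \ref{P1} for $I_{E}$ and the criterion defining the dual positive Schur property coincide word for word, so the two properties are equivalent, as claimed.

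There is no substantial obstacle here; the proof is essentially the composition of two already-established equivalences. The only point that genuinely uses a hypothesis is the appeal to property $(\mathrm{d})$: it is precisely this property that licenses, via Proposition \ref{P1}, the reduction from arbitrary disjoint weak$^{\ast }$ null sequences to positive ones, thereby aligning the almost-limited condition with the positive-disjoint formulation of the dual positive Schur property. Without property $(\mathrm{d})$ one could not make that passage, and the equivalence would not be available in this form.
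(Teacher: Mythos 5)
Your proof is correct and follows exactly the paper's own route: the paper states the corollary "follows immediately from Proposition 2.3 of \cite{WN2012} combined with Proposition \ref{P1}", and your argument simply spells out that combination, specializing Proposition \ref{P1} to $T=I_{E}$ (so $\left\Vert T^{\ast}(f_{n})\right\Vert = \left\Vert f_{n}\right\Vert$) and matching the resulting criterion with the disjoint-positive reformulation of the dual positive Schur property.
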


The following result shows that if a positive almost limited operator $%
T:E\rightarrow F$ has its range in a Banach lattice with the property $(%
\mathrm{d})$, then every positive operator $S:E\rightarrow F$ that it
dominates (i.e., $0\leq S\leq T$) is also almost limited.

\begin{proposition}
\label{dom}Let $E$ and $F$ be two Banach lattices such that $F$ has the
property $(\mathrm{d})$. If a positive operator $S:E\rightarrow F$ is
dominated by an almost limited operator, then $S$ itself is almost limited.
\end{proposition}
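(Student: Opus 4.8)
The plan is to invoke Proposition \ref{P1} twice: once to reduce the verification of the almost limited property to positive disjoint sequences, and once to transfer the conclusion from the dominating operator to $S$. Write $T$ for an almost limited operator with $0\leq S\leq T$; note that $S\geq 0$ together with $S\leq T$ forces $T\geq 0$, so $T$ is in fact a positive almost limited operator. Because $F$ has the property $(\mathrm{d})$, Proposition \ref{P1} applied to $S:E\rightarrow F$ tells us that it is enough to prove $\left\Vert S^{\ast }\left( f_{n}\right) \right\Vert \rightarrow 0$ for every weak* null sequence $\left( f_{n}\right) $ in $F^{\ast }$ consisting of positive, pairwise disjoint elements. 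So I would fix such a sequence $\left( f_{n}\right) $ from the outset.

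The key step is then to pass to the adjoints. From $0\leq S\leq T$ one gets $0\leq S^{\ast }\leq T^{\ast }$, since the adjoint of a positive operator is positive and $\left( T-S\right) ^{\ast }=T^{\ast }-S^{\ast }$. Evaluating these positive operators at the positive functional $f_{n}$ yields the order relation $0\leq S^{\ast }\left( f_{n}\right) \leq T^{\ast }\left( f_{n}\right) $ in $E^{\ast }$. Monotonicity of the lattice norm then gives $\left\Vert S^{\ast }\left( f_{n}\right) \right\Vert \leq \left\Vert T^{\ast }\left( f_{n}\right) \right\Vert $ for every $n$.

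To finish, I would apply Proposition \ref{P1} to the almost limited operator $T$: since $F$ has the property $(\mathrm{d})$ and $\left( f_{n}\right) $ is a positive disjoint weak* null sequence, we have $\left\Vert T^{\ast }\left( f_{n}\right) \right\Vert \rightarrow 0$. Combined with the inequality above, this forces $\left\Vert S^{\ast }\left( f_{n}\right) \right\Vert \rightarrow 0$, and a last appeal to Proposition \ref{P1} shows that $S$ is almost limited. The only real subtlety is that the order domination of the adjoints converts into the norm inequality $\left\Vert S^{\ast }\left( f_{n}\right) \right\Vert \leq \left\Vert T^{\ast }\left( f_{n}\right) \right\Vert $ precisely because $f_{n}$ is positive, so that $S^{\ast }\left( f_{n}\right) $ and $T^{\ast }\left( f_{n}\right) $ are comparable in $E^{\ast }$; for an arbitrary disjoint weak* null sequence this comparison would fail, and it is exactly Proposition \ref{P1} together with the property $(\mathrm{d})$ that removes this obstacle.
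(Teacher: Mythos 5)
Your proof is correct and follows essentially the same route as the paper: fix a positive disjoint weak* null sequence $(f_n)$ in $F^{\ast}$, use the almost limitedness of $T$ to get $\left\Vert T^{\ast}(f_n)\right\Vert \rightarrow 0$, pass to $\left\Vert S^{\ast}(f_n)\right\Vert \leq \left\Vert T^{\ast}(f_n)\right\Vert$ via $0\leq S^{\ast}(f_n)\leq T^{\ast}(f_n)$, and conclude with Proposition \ref{P1}. The only cosmetic difference is that you cite Proposition \ref{P1} for the convergence $\left\Vert T^{\ast}(f_n)\right\Vert \rightarrow 0$, where the definition of almost limited already suffices, as the paper notes.
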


\begin{proof}
Let $S,\quad T:E\rightarrow F$ be two operators such that $0\leq S\leq T$
and $T$ is almost limited. Let $(f_{n})$ be a disjoint sequence in $\left(
F^{\ast }\right) ^{+}$ such that $f_{n}\overset{w^{\ast }}{\rightarrow }0$.
As $T$ is almost limited, $\left\Vert T^{\ast }(f_{n})\right\Vert
\rightarrow 0$. Using the inequalities $0\leq S^{\ast }(f_{n})\leq T^{\ast
}(f_{n})$, we see that $\left\Vert S^{\ast }(f_{n})\right\Vert \leq
\left\Vert T^{\ast }(f_{n})\right\Vert $ for all $n$, from which we get $%
\left\Vert S^{\ast }(f_{n})\right\Vert \rightarrow 0$. Now, by Proposition %
\ref{P1} $S$ is well almost limited.
\end{proof}

The next remark will be useful in further considerations.

\begin{remark}
\label{Rem1}

\begin{enumerate}
\item Consider the scheme of operators $X\overset{R}{\rightarrow }Y\overset{S%
}{\rightarrow }F$. It is easy to see that if $S$ is an almost limited
operator, then $S\circ R$ is likewise almost limited.

\item Consider the scheme of operators $X\overset{R}{\rightarrow }E\overset{S%
}{\rightarrow }F$.

\begin{enumerate}
\item If $R$ is an almost limited operator, then $S\circ R$ is not
necessarily almost limited. In fact, by a result in \cite{Wn1}, there exists
a non regular operator $S:\ell ^{\infty }\rightarrow c_{0}$, which is
certainly not compact. So by Proposition 4.3\ of \cite{Mach}, $S$ is not
almost limited. If $R:\ell ^{\infty }\rightarrow \ell ^{\infty }$ is the
identity operator on $\ell ^{\infty }$ then $R$ is almost limited but $%
S\circ R=S$ is not almost limited.

\item \label{rem}However, if $E$ has the dual positive Schur property (for
example, $E=\ell ^{\infty }$) and $F$ has the property $(\mathrm{d})$, and $S
$ is positive, then $T=S\circ R$ is an almost limited operator. In fact,
according to Proposition \ref{P1}, let $\left( f_{n}\right) \subset F^{\ast }
$ be a positive disjoint weak$^{\ast }$ null sequence. Clearly $0\leq
S^{\ast }f_{n}\overset{w^{\ast }}{\rightarrow }0$ holds in $E^{\ast }$.
Since $E$ has the dual positive Schur property then $\left\Vert S^{\ast
}f_{n}\right\Vert \rightarrow 0$, and hence $\left\Vert T^{\ast
}f_{n}\right\Vert =\left\Vert R^{\ast }\left( S^{\ast }f_{n}\right)
\right\Vert \rightarrow 0$, as desired.
\end{enumerate}
\end{enumerate}
\end{remark}

Our next major result characterizes pairs of Banach lattices $E$, $F$ for
which every positive almost limited operator $T:E\rightarrow F$ is weakly
compact.

\begin{theorem}
\label{T2 ess}Let $E$ and $F$ be two Banach lattices such that $F$ is $%
\sigma $-Dedekind complete. Then the following assertions are equivalent:
\end{theorem}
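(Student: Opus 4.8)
The plan is to read the three equivalent assertions as $(1)$ ``every almost limited operator $T:E\rightarrow F$ is weakly compact'', $(2)$ its restriction ``every \emph{positive} almost limited operator $T:E\rightarrow F$ is weakly compact'', and $(3)$ ``$E$ is reflexive or the norm of $F$ is order continuous'', and to prove them by the cycle $(1)\Rightarrow(2)\Rightarrow(3)\Rightarrow(1)$. The implication $(1)\Rightarrow(2)$ is immediate, since a positive almost limited operator is in particular almost limited.

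For $(3)\Rightarrow(1)$ I would split into two cases. If $E$ is reflexive, then $B_{E}$ is weakly compact and any operator carries it to a relatively weakly compact set, so \emph{every} $T$ (almost limited or not) is weakly compact. The substantive case is when the norm of $F$ is order continuous: here I would show that every almost limited subset $A$ of an order continuous Banach lattice is relatively weakly compact and apply this to $A=T(B_{E})$. I would argue by contraposition using the disjointness characterization of relative weak compactness in \cite{AB3, MN}: if a norm bounded $A\subseteq F$ with $F$ order continuous fails to be relatively weakly compact, then there exist a positive disjoint weak$^{\ast}$ null sequence $(g_{n})\subset F^{\ast}$, a sequence $(x_{n})\subseteq A$ and $\varepsilon>0$ with $g_{n}(x_{n})\ge\varepsilon$. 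Since $A$ is almost limited this is impossible, because almost limitedness forces $\sup_{x\in A}|g_{n}(x)|\to 0$; hence $A$ is relatively weakly compact.

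For $(2)\Rightarrow(3)$ I would prove the contrapositive by constructing, under the assumption that $E$ is \emph{not} reflexive and the norm of $F$ is \emph{not} order continuous, a positive almost limited operator $E\rightarrow F$ that is not weakly compact. Since $F$ is $\sigma$-Dedekind complete with non–order–continuous norm, it contains a closed sublattice lattice-isomorphic to $\ell^{\infty}$; let $i:\ell^{\infty}\rightarrow F$ be the resulting positive isomorphic embedding, obtained from a positive, order bounded, disjoint sequence that is bounded away from $0$ in norm, the suprema defining $i$ existing by $\sigma$-Dedekind completeness. Since $E$ is not reflexive, $B_{E^{\ast}}^{+}$ is not relatively weakly compact (otherwise $B_{E^{\ast}}\subseteq B_{E^{\ast}}^{+}-B_{E^{\ast}}^{+}$ would be, forcing $E$ reflexive); choose $(g_{n})\subset B_{E^{\ast}}^{+}$ with no weakly convergent subsequence and define the positive operator $S:E\rightarrow\ell^{\infty}$ by $S(x)=(g_{n}(x))_{n}$. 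Then $S$ is not weakly compact, for $g_{n}=S^{\ast}(e_{n}^{\ast})$ with $(e_{n}^{\ast})$ bounded in $(\ell^{\infty})^{\ast}$, so weak compactness of $S$ would make $\{g_{n}\}$ relatively weakly compact. Finally I set $T=i\circ S$. It is positive as a composition of positive maps, and by \autoref{Rem1}, part (2)(b) (applied to the scheme $E\xrightarrow{S}\ell^{\infty}\xrightarrow{i}F$, where $\ell^{\infty}$ has the dual positive Schur property, $F$ has property $(\mathrm{d})$ and $i\ge 0$) it is almost limited; since $i$ is an isomorphic embedding and $S$ is not weakly compact, $T$ is not weakly compact, contradicting $(2)$.

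The main obstacle is the order continuous case of $(3)\Rightarrow(1)$, namely the lemma that a set failing relative weak compactness in an order continuous Banach lattice can be separated by a \emph{positive disjoint weak$^{\ast}$ null} sequence in the dual. This is exactly where order continuity is indispensable, and it must be genuinely exploited rather than circumvented: without it the statement is false, since $B_{\ell^{\infty}}$ is almost limited but not relatively weakly compact. I expect to realize this step through the Dodds--Fremlin/Aliprantis--Burkinshaw disjointness criterion for weak compactness, using that order bounded disjoint sequences are norm null in $F$ to upgrade a merely disjoint separating sequence to a weak$^{\ast}$ null one.
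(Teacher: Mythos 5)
Your skeleton coincides with the paper's: $(1)\Rightarrow(2)$ is trivial; your $(2)\Rightarrow(3)$ is the paper's construction essentially verbatim (embed $\ell^{\infty}$ into $F$ using $\sigma$-Dedekind completeness plus non-order-continuity, pick $(g_{n})\subset B_{E^{\ast}}^{+}$ with no weakly convergent subsequence, set $S(x)=(g_{n}(x))_{n}$ and $T=i\circ S$, invoke Remark~\ref{Rem1}(2)(b) for almost limitedness and $S^{\ast}e_{n}^{\ast}=g_{n}$ against weak compactness); and your $(3)\Rightarrow(1)$ splits into the same two cases, with the reflexive case handled identically. The one place you genuinely depart from the paper is case (b): the paper disposes of it in one line by citing Theorem 4.2 of \cite{Mach} (almost limited operators into a $\sigma$-Dedekind complete Banach lattice with order continuous norm are L-weakly compact) together with Theorem 5.61 of \cite{AB3} (L-weakly compact operators are weakly compact), whereas you propose to prove directly that almost limited subsets of an order continuous Banach lattice are relatively weakly compact. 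In effect you are re-proving the cited result of \cite{Mach}, which is legitimate and makes the argument self-contained.

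However, the way you phrase that key step contains a gap you should repair. There is no ``disjointness characterization of relative weak compactness'' of the kind you invoke, so you cannot obtain your separating sequence ``by contraposition'' from such a criterion: weak nullity of every disjoint sequence in the solid hull of $A$ is necessary but \emph{not} sufficient for relative weak compactness ($B_{c_{0}}$ is its own solid hull, every disjoint sequence in it is weakly null, yet it is not relatively weakly compact). What is true, and what your argument actually needs, is the chain through L-weak compactness: L-weakly compact sets are relatively weakly compact, so your non-relatively-weakly-compact set $A=T(B_{E})$ fails to be L-weakly compact, i.e.\ some disjoint sequence in the solid hull of $A$ is not norm null; the Dodds--Fremlin duality theorem in \cite{AB3} then converts this into a norm bounded disjoint sequence $(f_{n})\subset F^{\ast}$ with $\sup_{x\in A}\left\vert f_{n}(x)\right\vert \geq \varepsilon$; and finally order continuity of $F$ makes every norm bounded disjoint sequence in $F^{\ast}$ weak$^{\ast}$ null (again via Dodds--Fremlin applied to order intervals, using that order bounded disjoint sequences in $F$ are norm null---exactly the fact you name). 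So your plan does close, but only through the L-weak compactness machinery, not through any disjoint-sequence criterion for weak compactness itself. Two smaller points: you do not need the separating sequence to be positive---a disjoint weak$^{\ast}$ null sequence already contradicts almost limitedness---and insisting on positivity is actually delicate, since replacing $f_{n}$ by $\left\vert f_{n}\right\vert$ at the same points can fail ($\left\vert f_{n}\right\vert (x_{n})$ may vanish while $\left\vert f_{n}(x_{n})\right\vert \geq \varepsilon$); positivity would have to come from a Proposition~\ref{P1}-type argument using property $(\mathrm{d})$, which is available here because order continuity forces Dedekind completeness.
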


\begin{enumerate}
\item Every almost limited operator $T:E\rightarrow F$ is weakly compact.

\item Every positive almost limited operator $T:E\rightarrow F$ is weakly
compact.

\item One of the following statements is valid:

\begin{enumerate}
\item $E$ is reflexive.

\item The norm of $F$ is order continuous.
\end{enumerate}
\end{enumerate}

\begin{proof}
$\left( 1\right) \Rightarrow \left( 2\right) $ Obvious.

$\left( 2\right) \Rightarrow \left( 3\right) $ Assume by way of
contradiction that $E$ is not reflexive and the norm of $F$ is not order
continuous. We have to construct a positive almost limited operator $%
T:E\rightarrow F$ which is not weakly compact.

Indeed, since the norm of $F$ is not order continuous, then by Corollary
2.4.3 of \cite{MN} we may assume that $\ell ^{\infty }$ is a closed
sublattice of $F$. As $E$ is not reflexive then $E^{\ast }$ is not
reflexive, and hence the closed unit ball $B_{E^{\ast }}$ of $E^{\ast }$ is
not weakly compact. So, from $B_{E^{\ast }}\subset B_{E^{\ast
}}^{+}-B_{E^{\ast }}^{+}$, we see that $B_{E^{\ast }}^{+}$ is not weakly
compact. Then, by the Eberlein-\v{S}mulian theorem one can find a sequence $%
(f_{n})$ in $B_{E^{\ast }}^{+}$ which does not have any weakly convergent
subsequence. Consider the positive operator $T:E\rightarrow \ell ^{\infty
}\subset F$ defined by%
\begin{equation*}
T\left( x\right) =\left( f_{n}\left( x\right) \right) _{n=1}^{\infty }
\end{equation*}%
for all $x\in E$. By Remark 2.4(\ref{rem}) $T$ is an almost limited operator.
But $T$ is not weakly compact. In fact, if $T$ were weakly compact then $%
T^{\ast }:\left( \ell ^{\infty }\right) ^{\ast }\rightarrow E^{\ast }$ would
weakly compact. Note that $T^{\ast }\left( \left( \lambda _{n}\right)
_{n=1}^{\infty }\right) =\sum\limits_{n=1}^{\infty }\lambda _{n}f_{n}$ for
every $\left( \lambda _{n}\right) _{n=1}^{\infty }\in \ell ^{1}\subset
\left( \ell ^{\infty }\right) ^{\ast }$. So, if $e_{n}$ is the usual basis
element in $\ell ^{1}$ then $T^{\ast }\left( e_{n}\right) =f_{n}$ so that $%
(f_{n})$ would have a weakly convergent subsequence. This contradicts the
choice of $(f_{n})$. Therefore, $T$ is not weakly compact, as desired.

$\left( a\right) \Rightarrow \left( 1\right) $ In this case, every operator
from $E$ into $F$ is weakly compact.

$\left( b\right) \Rightarrow \left( 1\right) $ By Theorem 4.2 of \cite{Mach}
we see that $T$ is L-weakly compact, and by Theorem 5.61 of \cite{AB3} $T$
is well weakly compact.
\end{proof}

By a similar proof as the previous theorem, we obtain the following result.

\begin{theorem}
\label{T2'}Let $X$ a Banach space and $F$ a $\sigma $-Dedekind complete
Banach lattice. Then the following assertions are equivalent:
\end{theorem}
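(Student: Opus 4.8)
The plan is to follow the proof of \autoref{T2 ess} almost verbatim, the only difference being that the domain is now a general Banach space $X$ carrying no order structure. Consequently I expect the positivity clause to disappear and the equivalence to collapse to two conditions: (1) every almost limited operator $T:X\rightarrow F$ is weakly compact, and (2) $X$ is reflexive or the norm of $F$ is order continuous. So there is nothing playing the role of the intermediate condition ``every \emph{positive} almost limited operator is weakly compact,'' and I would prove $(1)\Leftrightarrow(2)$ directly.

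For the direction $(2)\Rightarrow(1)$ I would split into two cases corresponding to the two disjuncts. If $X$ is reflexive, then $B_X$ is weakly compact, so any operator $T:X\rightarrow F$ sends it to a relatively weakly compact set and is therefore weakly compact; almost limitedness is not even needed here. If instead the norm of $F$ is order continuous, then for an almost limited $T$, Theorem 4.2 of \cite{Mach} shows that $T$ is L-weakly compact, and Theorem 5.61 of \cite{AB3} upgrades this to weak compactness.

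The substance is in $(1)\Rightarrow(2)$, which I would prove contrapositively: assuming $X$ is non-reflexive and the norm of $F$ is not order continuous, I construct an almost limited operator that fails to be weakly compact. Since $F$ is $\sigma$-Dedekind complete with non-order-continuous norm, Corollary 2.4.3 of \cite{MN} lets me treat $\ell^\infty$ as a closed sublattice of $F$. Since $X$ is not reflexive, neither is $X^\ast$, so $B_{X^\ast}$ is not weakly compact, and by the Eberlein-\v{S}mulian theorem I can extract a sequence $(f_n)\subset B_{X^\ast}$ with no weakly convergent subsequence. I then define $T:X\rightarrow \ell^\infty\subset F$ by $T(x)=(f_n(x))_{n=1}^\infty$.

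Two verifications remain. First, that $T$ is almost limited: writing it as the composition $X\xrightarrow{R}\ell^\infty\xrightarrow{\iota}F$ with $\iota$ the positive inclusion, and using that $\ell^\infty$ has the dual positive Schur property while $F$ (being $\sigma$-Dedekind complete) has property $(\mathrm{d})$, Remark 2.4(\ref{rem}) yields that $T$ is almost limited. Second, that $T$ is not weakly compact: were it so, $T^\ast$ would be weakly compact, but $T^\ast(e_n)=f_n$ for the unit vectors $e_n\in \ell^1\subset (\ell^\infty)^\ast$, forcing $(f_n)$ to admit a weakly convergent subsequence and contradicting its choice. The only genuine point of care is routing the almost limitedness of $T$ through this factorization so that Remark 2.4(\ref{rem}) applies; every other step transfers directly from \autoref{T2 ess}.
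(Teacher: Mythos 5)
Your proposal is correct and matches the paper's intent exactly: the paper proves \autoref{T2'} simply by noting it follows ``by a similar proof as the previous theorem,'' and your write-up is precisely that adaptation of the proof of \autoref{T2 ess} --- dropping the positivity of the functionals $\left( f_{n}\right) \subset B_{X^{\ast }}$ (no longer needed since there is no positive-operator clause) and justifying almost limitedness of $T$ via the factorization $X\rightarrow \ell ^{\infty }\rightarrow F$ through Remark 2.4(\ref{rem}). No gaps; this is essentially the same argument the paper relies on.
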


\begin{enumerate}
\item Every almost limited operator $T:X\rightarrow F$ is weakly compact.

\item One of the following statements is valid:

\begin{enumerate}
\item $X$ is reflexive.

\item The norm of $F$ is order continuous.
\end{enumerate}
\end{enumerate}

As a consequence of Theorem \ref{T2 ess}, we obtain an operator
characterization of order continuity of the norm of a $\sigma $-Dedekind
complete Banach lattice.

\begin{corollary}
\label{C1}Let $E$ be a $\sigma $-Dedekind complete Banach lattice. Then the
following statements are equivalent:

\begin{enumerate}
\item Every almost limited operator $T$ from $E$ into $E$ is weakly compact.

\item Every positive almost limited operator $T$ from $E$ into $E$ is weakly
compact.

\item The norm of $E$ is order continuous.
\end{enumerate}
\end{corollary}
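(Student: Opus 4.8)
The plan is to deduce this directly from \autoref{T2 ess} by specializing to the diagonal case $F=E$. Since $E$ is $\sigma$-Dedekind complete by hypothesis, \autoref{T2 ess} applies with $F=E$ and yields the equivalence of three conditions: every almost limited operator $T:E\rightarrow E$ is weakly compact; every positive almost limited operator $T:E\rightarrow E$ is weakly compact; and one of the two statements ``$E$ is reflexive'' or ``the norm of $E$ is order continuous'' holds.

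First I would observe that assertions (1) and (2) of the corollary coincide verbatim with assertions (1) and (2) of \autoref{T2 ess} applied to $F=E$, so nothing further is required for those. The only point demanding an argument is the equivalence between assertion (3) of the corollary, namely order continuity of the norm of $E$, and the disjunction produced by the theorem.

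For this equivalence, the forward implication is immediate, since order continuity of the norm is exactly the second alternative in the disjunction. For the reverse implication, I would invoke the standard fact that every reflexive Banach lattice has order continuous norm; indeed, a Banach lattice $E$ is reflexive if and only if both $E$ and $E^{\ast}$ have order continuous norm (see \cite[Theorem 2.4.15]{MN}). Consequently, if either alternative of the disjunction holds, then in particular the norm of $E$ is order continuous, which is precisely assertion (3).

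I expect no genuine obstacle here: the entire analytic content is already packaged in \autoref{T2 ess}, and the sole observation is that the reflexivity alternative becomes redundant once $E=F$, because reflexivity of $E$ forces order continuity of its norm. The argument is therefore purely a specialization combined with one classical implication from the general theory of Banach lattices.
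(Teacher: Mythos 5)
Your argument is correct and coincides with the paper's (implicit) proof: the corollary is stated in the paper without a separate proof precisely because it is Theorem~\ref{T2 ess} specialized to $F=E$, with the reflexivity alternative absorbed by the fact that a reflexive Banach lattice automatically has order continuous norm. One correction to your citation, though: \cite[Theorem~2.4.15]{MN} characterizes reflexivity by $E$ and $E^{\ast}$ both being \emph{KB-spaces}, not by both having order continuous norms (that version is false --- $c_{0}$ and $\ell^{1}$ both have order continuous norms, yet $c_{0}$ is not reflexive); the implication you actually need, reflexive $\Rightarrow$ order continuous norm, does hold, since every reflexive Banach lattice is a KB-space.
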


Another consequence of Theorem \ref{T2 ess} is the following result.

\begin{corollary}
\label{C2}For a Banach lattice $E$, the following statements are equivalent:

\begin{enumerate}
\item Every positive operator $T:E\rightarrow F$ from $E$ to an arbitrary
infinite dimensional AM-space is weakly compact.

\item Every positive operator $T:E\rightarrow \ell ^{\infty }$ is weakly
compact.

\item $E$ is reflexive.
\end{enumerate}
\end{corollary}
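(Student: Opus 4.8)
The plan is to prove the cycle $(1)\Rightarrow(2)\Rightarrow(3)\Rightarrow(1)$, with the bulk of the work in $(2)\Rightarrow(3)$, which I would reduce to \autoref{T2 ess}. The implication $(1)\Rightarrow(2)$ is immediate, since $\ell^{\infty}$ is itself an infinite dimensional AM-space and hence an admissible target in $(1)$. For $(3)\Rightarrow(1)$, I would simply note that reflexivity of $E$ makes every bounded operator out of $E$ weakly compact: $B_{E}$ is weakly compact and any operator is weak-to-weak continuous, so it sends $B_{E}$ to a relatively weakly compact set. In particular every positive operator from $E$ into an arbitrary infinite dimensional AM-space is weakly compact. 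Note that this last step uses neither positivity nor the AM-space structure; those hypotheses only serve to keep the class in $(1)$ small enough that its weak compactness is equivalent to reflexivity rather than automatic.

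The heart of the argument is the observation that \emph{every} positive operator $T:E\rightarrow \ell^{\infty}$ is automatically almost limited. The reason is that $\ell^{\infty}$ carries a strong order unit $\mathbf{1}$, so that for a positive functional $g\in(\ell^{\infty})^{\ast}$ one has $\|g\|=g(\mathbf{1})$. Hence, for any positive disjoint weak* null sequence $(g_{n})\subset(\ell^{\infty})^{\ast}$, testing weak* convergence against $\mathbf{1}\in\ell^{\infty}$ gives $\|g_{n}\|=g_{n}(\mathbf{1})\to 0$, and therefore $\|T^{\ast}g_{n}\|\le \|T\|\,\|g_{n}\|\to 0$. Since $\ell^{\infty}$ is $\sigma$-Dedekind complete, it has the property $(\mathrm{d})$, so \autoref{P1} applies and shows that $T$ is almost limited. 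This is the same phenomenon recorded for $\ell^{\infty}$ in \autoref{Rem1}, namely that $\ell^{\infty}$ has the dual positive Schur property.

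With this in hand, $(2)\Rightarrow(3)$ is quick. By the previous paragraph the positive operators $E\rightarrow \ell^{\infty}$ are precisely the positive almost limited operators $E\rightarrow \ell^{\infty}$, so hypothesis $(2)$ says exactly that every positive almost limited operator $E\rightarrow \ell^{\infty}$ is weakly compact. As $\ell^{\infty}$ is $\sigma$-Dedekind complete, \autoref{T2 ess} applies with $F=\ell^{\infty}$ and yields that $E$ is reflexive or the norm of $\ell^{\infty}$ is order continuous. But the norm of $\ell^{\infty}$ is not order continuous --- for instance the order-decreasing sequence $(0,\dots,0,1,1,\dots)_{n}$ tends to $0$ in order while retaining norm $1$ --- so $E$ must be reflexive, which is $(3)$.

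I expect the only delicate point --- and the step I would write out most carefully --- to be the verification that positive operators into $\ell^{\infty}$ are almost limited; everything else is either formal or a direct appeal to \autoref{T2 ess}. Once that verification is secured via the strong unit of $\ell^{\infty}$ together with \autoref{P1}, the equivalence falls out immediately.
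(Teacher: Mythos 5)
Your proof is correct, and its skeleton is the same as the paper's: $(1)\Rightarrow(2)$ and $(3)\Rightarrow(1)$ are immediate, and $(2)\Rightarrow(3)$ is obtained by applying Theorem~\ref{T2 ess} with $F=\ell^{\infty}$, whose norm is not order continuous. The one substantive difference is how you pass from hypothesis $(2)$ of the corollary to hypothesis $(2)$ of Theorem~\ref{T2 ess}. You do it by proving that the positive operators $E\rightarrow\ell^{\infty}$ \emph{coincide} with the positive almost limited operators $E\rightarrow\ell^{\infty}$ (via the strong unit of $\ell^{\infty}$, i.e.\ its dual positive Schur property, together with Proposition~\ref{P1}); this claim is true, and is essentially the content of Remark~\ref{Rem1}(2)(b). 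But note that for this direction of the implication the coincidence of the two classes is superfluous: since every positive almost limited operator $E\rightarrow\ell^{\infty}$ is in particular a positive operator, hypothesis $(2)$ of the corollary \emph{trivially} implies hypothesis $(2)$ of Theorem~\ref{T2 ess}, and reflexivity then follows because the norm of $\ell^{\infty}$ fails to be order continuous. So the step you single out as ``the heart of the argument'' and ``the only delicate point'' is in fact dispensable; the quantifier in $(2)$ runs over a larger class of operators than the theorem needs, which is the easy containment. Where your observation does real work is inside the paper's proof of Theorem~\ref{T2 ess} itself: there, Remark~\ref{Rem1}(2)(b) (your lemma, in essence) is what certifies that the constructed non-weakly-compact operator $x\mapsto\left(f_{n}(x)\right)_{n=1}^{\infty}$ is almost limited. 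At the level of the corollary, your extra lemma buys a cleaner conceptual statement --- the corollary is literally the theorem restated for $F=\ell^{\infty}$ --- at the cost of front-loading an argument that the deduction does not require.
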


\begin{proof}
$\left( 1\right) \Rightarrow \left( 2\right) $ and $\left( 3\right)
\Rightarrow \left( 1\right) $ are obvious.

$\left( 2\right) \Rightarrow \left( 3\right) $ Follows from Theorem \ref{T2
ess}.
\end{proof}

The following result characterize Banach lattice $E$ for which every
positive almost limited operator $T:E\rightarrow E$ has a weakly compact
square.

\begin{theorem}
\label{T3}Let $E$ be a $\sigma $-Dedekind complete Banach lattice. Then the
following statements are equivalent:

\begin{enumerate}
\item Every positive almost limited operator $T$ from $E$ into $E$ is weakly
compact.

\item For every positive almost limited operator $T$ from $E$ into $E$, the
operator $T^{2}$ is weakly compact.

\item The norm of $E$ is order continuous.
\end{enumerate}
\end{theorem}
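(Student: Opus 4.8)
The plan is to prove the cycle $(1)\Rightarrow(2)\Rightarrow(3)\Rightarrow(1)$, the only substantial step being $(2)\Rightarrow(3)$. The implication $(1)\Rightarrow(2)$ is immediate, since if each positive almost limited $T$ is weakly compact then so is $T^{2}=T\circ T$ (the weakly compact operators form a two-sided ideal). For $(3)\Rightarrow(1)$ I would invoke Corollary~\ref{C1} (equivalently \autoref{T2 ess} with $F=E$): as $E$ is $\sigma$-Dedekind complete, order continuity of its norm already forces every positive almost limited operator $E\to E$ to be weakly compact.

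The core of the argument is $(2)\Rightarrow(3)$, which I would establish by contraposition. Suppose the norm of $E$ is not order continuous. Since $E$ is $\sigma$-Dedekind complete, Corollary 2.4.3 of \cite{MN} allows me to view $\ell^{\infty}$ as a closed sublattice of $E$; let $i\colon\ell^{\infty}\to E$ denote the associated positive lattice embedding. Recalling that $\ell^{\infty}$ is an injective Banach lattice, the identity map of $\ell^{\infty}$ extends along $i$ to a positive projection $P\colon E\to\ell^{\infty}$ satisfying $P\circ i=I_{\ell^{\infty}}$. The operator I propose is the idempotent
\[
T=i\circ P\colon E\to E,
\]
which is positive as a composition of positive operators.

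It then remains to verify the two key properties of $T$. First, $T$ is almost limited: writing it as the composition $E\overset{P}{\rightarrow}\ell^{\infty}\overset{i}{\rightarrow}E$, observing that $\ell^{\infty}$ has the dual positive Schur property, that $E$ has property $(\mathrm{d})$ (being $\sigma$-Dedekind complete), and that the second factor $i$ is positive, Remark~\ref{Rem1}(\ref{rem}) yields that $T$ is almost limited. Second, since $P\circ i=I_{\ell^{\infty}}$ we have $T^{2}=i\circ(P\circ i)\circ P=i\circ P=T$, so it suffices to see that $T$ itself is not weakly compact. Were it weakly compact, then $P\circ T\circ i=P\circ i\circ P\circ i=I_{\ell^{\infty}}$ would be weakly compact, forcing $\ell^{\infty}$ to be reflexive, which is absurd. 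Thus $T^{2}=T$ is a positive almost limited operator that is not weakly compact, contradicting $(2)$ and completing the contrapositive.

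The step I expect to demand the most care is this construction in $(2)\Rightarrow(3)$, where a single operator must be simultaneously positive, almost limited, and of non--weakly-compact square. The decisive idea is to realize $T$ as a projection onto a copy of $\ell^{\infty}$: idempotency then gives $T^{2}=T$ at no cost, and the whole matter collapses to the non-reflexivity of $\ell^{\infty}$. The only genuinely external ingredient is the injectivity of $\ell^{\infty}$, which produces the positive projection $P$.
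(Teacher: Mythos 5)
Your proposal is correct, and for the crucial implication $(2)\Rightarrow(3)$ it takes a genuinely different route from the paper. The paper argues by hand: from non-order-continuity it extracts (Theorem 4.14 of \cite{AB3}) a disjoint sequence $(u_{n})\subset E^{+}$ with $\left\Vert u_{n}\right\Vert =1$ and $0\leq u_{n}\leq u$, builds pairwise disjoint functionals $f_{n}=g_{n}\circ P_{n}$ out of band projections, defines $S:\ell ^{\infty }\rightarrow E$ by order sums $S\left( \left( t_{n}\right) \right) =\left( o\right) \sum t_{n}u_{n}$ (this is where $\sigma$-Dedekind completeness enters) and $R(x)=\left( f_{n}(x)\right) $, takes $T=S\circ R$ (almost limited by the same Remark \ref{Rem1}(\ref{rem}) you use), and certifies non-weak-compactness concretely: $T(x_{n})=x_{n}$ for $x_{n}=\sum_{k=1}^{n}u_{k}$, and $(x_{n})$ has no weakly convergent subsequence by Theorem 3.52 of \cite{AB3} (since $T^{2}(x_{n})=x_{n}$ too, this contradicts (2)). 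You instead get the sublattice copy of $\ell ^{\infty }$ from the same Corollary 2.4.3 of \cite{MN} but then produce the positive projection $P$ abstractly, from the injectivity of $\ell ^{\infty }$ in the category of Banach lattices, and finish with idempotency of $T=i\circ P$ plus the ideal property of weakly compact operators and non-reflexivity of $\ell ^{\infty }$. What your route buys is brevity and transparency: idempotency makes the passage from $T$ to $T^{2}$ trivial, a point the paper actually glosses over (its proof only verifies that $T$ is not weakly compact, leaving the reader to note that $T^{2}$ fixes the same $x_{n}$). What it costs is the external ingredient: lattice injectivity of $\ell ^{\infty }$ (Lotz/Cartwright; equivalently, the positive Hahn--Banach extension of positive functionals from sublattices, applied coordinatewise) is a nontrivial theorem not among the paper's cited tools, and it is genuinely needed --- mere Banach-space injectivity of $\ell ^{\infty }$ (Phillips) would give a projection $P$ that need not be positive, and positivity of $T$ is essential because statement (2) quantifies only over positive operators. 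It is worth noting that the two constructions are secretly the same object: since $f_{n}(u_{m})=\delta _{nm}$ one has $R\circ S=I_{\ell ^{\infty }}$, so the paper's $T=S\circ R$ is also a positive idempotent projecting $E$ onto a lattice copy of $\ell ^{\infty }$; the paper simply builds the positive projection by hand (band projections in place of positive Hahn--Banach) and thereby stays self-contained.
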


\begin{proof}
$\left( 1\right) \Rightarrow \left( 2\right) $ Obvious.

$\left( 2\right) \Rightarrow \left( 3\right) $ Assume by way of
contradiction that the norm of $E$ is not order continuous. So, by Theorem
4.14 of \cite{AB3} there exists a disjoint sequence $(u_{n})\subset E^{+}$
satisfying $\left\Vert u_{n}\right\Vert =1$ and $0\leq u_{n}\leq u$ for all $%
n$ and for some $u\in E^{+}$. We can now proceed analogously to the proof of
Proposition 0.5.5 of \cite{WN}. Let $g_{n}\in E_{+}^{\ast }$ be of norm one
and such that $g_{n}(u_{n})=\left\Vert u_{n}\right\Vert =1$ and let $P_{n}$\
be the band projection onto $\left\{ u_{n}\right\} ^{dd}$, where $\left\{
u_{n}\right\} ^{dd}$ is the band generated by $\left\{ u_{n}\right\} $. If $%
f_{n}=g_{n}\circ P_{n}$, then $f_{n}\wedge f_{m}=0$ for $n\neq m$, $%
\sup_{n}\left\Vert f_{n}\right\Vert \leq 1$ and $f_{n}(u_{m})=\delta _{nm}$.
Hence the operator $S:\ell ^{\infty }\rightarrow E$ defined by%
\begin{equation*}
S(\left( t_{n}\right) _{n=1}^{\infty })=\left( o\right) \sum_{n=1}^{\infty
}t_{n}u_{n}
\end{equation*}%
is a lattice isomorphism from $\ell ^{\infty }$ into $E$, where $\left(
o\right) \sum_{n=1}^{\infty }t_{n}u_{n}$\ denotes the order limit of the
sequence of the partial sums $\sum_{n=1}^{m}t_{n}u_{n}$\ for each $\left(
t_{n}\right) _{n=1}^{\infty }\in \ell ^{\infty }$. Also, let $R:E\rightarrow
\ell ^{\infty }$ be the positive operator defined by%
\begin{equation*}
R(x)=\left( f_{n}(x)\right) _{n=1}^{\infty }.
\end{equation*}%
So, by Remark 2.4(\ref{rem}), the positive operator $T=S\circ R:E\rightarrow F$
defined by%
\begin{equation*}
T(x)=\left( o\right) \sum_{n=1}^{\infty }f_{n}(x)u_{n}
\end{equation*}%
is almost limited. But $T$ is not weakly compact. In fact, let $%
x_{n}=\sum_{k=1}^{n}u_{k}$ for each $n$, and note that $0\leq x_{n}\uparrow
\leq u$. Clearly $T(u_{n})=u_{n}$, and hence $T(x_{n})=x_{n}$\ for all $n$.
If $x$ is a weak limit of a subsequence of $\left( x_{n}\right) $, then it
is easy to see that $x_{n}\uparrow x$ and $x_{n}\overset{w}{\rightarrow }x$
must hold. By Theorem 3.52 of \cite{AB3} we have $\left\Vert
x_{n}-x\right\Vert \rightarrow 0$, and hence $\left\Vert
x_{n+1}-x_{n}\right\Vert \rightarrow 0$. But this contradicts $\left\Vert
x_{n+1}-x_{n}\right\Vert =\left\Vert u_{n+1}\right\Vert =1$ for all $n$.
Thus $\left( x_{n}\right) $ has no weakly convergent subsequence, and hence $%
T$ is not weakly compact, as desired.

$\left( 3\right) \Rightarrow \left( 1\right) $ Follows from Theorem \ref{T2
ess}.
\end{proof}

Finally, note that a weakly compact operator $T:X\rightarrow F$ need not be
almost limited. In fact, the identity operator of the Banach lattice $\ell
^{2}$ is weakly compact but it is not almost limited. However, if $F$ has
the positive Schur property, then the two class coincide. The details follow.

\begin{proposition}
An operator $T:X\rightarrow F$ from a Banach space $X$ to a Banach lattice $F
$ with the positive Schur property is weakly compact if and only if it is
almost limited.
\end{proposition}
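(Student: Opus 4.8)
The plan is to derive both implications from the single structural fact that the positive Schur property forces the norm of $F$ to be order continuous, and then to identify, under this hypothesis, the almost limited operators into $F$ with the L-weakly compact ones. First I would record the order-continuity reduction. If the norm of $F$ were not order continuous, then by Theorem 4.14 of \cite{AB3} there would be a disjoint sequence $(u_n)\subset F^{+}$ with $\|u_n\|=1$ and $0\le u_n\le u$ for some $u\in F^{+}$. For every $g\in(F^{\ast})^{+}$ the disjointness gives $\sum_{k=1}^{N}u_k=\bigvee_{k=1}^{N}u_k\le u$, whence $\sum_k g(u_k)\le g(u)<\infty$ and $g(u_k)\to 0$; decomposing an arbitrary functional into positive parts shows $u_n\xrightarrow{w}0$. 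The positive Schur property then forces $\|u_n\|\to 0$, contradicting $\|u_n\|=1$. Thus the norm of $F$ is order continuous.

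For the ``if'' direction (almost limited $\Rightarrow$ weakly compact) I would then argue exactly as in the implication $(b)\Rightarrow(1)$ of \autoref{T2 ess}: since the norm of $F$ is order continuous, Theorem 4.2 of \cite{Mach} shows that every almost limited operator $T:X\to F$ is L-weakly compact, and Theorem 5.61 of \cite{AB3} shows that every L-weakly compact operator is weakly compact.

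For the ``only if'' direction (weakly compact $\Rightarrow$ almost limited), suppose $T$ is weakly compact, so that $A:=T(B_X)$ is relatively weakly compact. I would first show $A$ is L-weakly compact. Since the norm of $F$ is order continuous, the solid hull $\mathrm{sol}(A)$ is again relatively weakly compact \cite{AB3}, and every disjoint sequence in a relatively weakly compact set is weakly null \cite{AB3}. Hence, given any disjoint sequence $(z_n)$ in $\mathrm{sol}(A)$, the disjoint sequence $(|z_n|)\subset\mathrm{sol}(A)$ satisfies $|z_n|\xrightarrow{w}0$, and the positive Schur property yields $\|z_n\|=\||z_n|\|\to 0$. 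Thus $A$ is L-weakly compact, i.e. $T$ is an L-weakly compact operator. It then remains to pass from L-weak compactness to almost limitedness: given a disjoint weak$^{\ast}$ null sequence $(f_n)\subset F^{\ast}$, suppose toward a contradiction that $\sup_{a\in A}|f_n(a)|\ge\delta>0$, and choose $a_n\in A$ with $|f_n|(|a_n|)\ge|f_n(a_n)|\ge\delta$. As the $|f_n|$ are pairwise disjoint, a disjointification argument produces a disjoint sequence $(v_n)$ with $0\le v_n\le|a_n|\in\mathrm{sol}(A)$ and $\inf_n\|v_n\|>0$, contradicting the L-weak compactness of $A$. Therefore $\sup_{a\in A}|f_n(a)|=\|T^{\ast}f_n\|\to 0$, and $T$ is almost limited.

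The main obstacle is this last step, that L-weakly compact sets are almost limited, and within it the only delicate point is the disjointification: the elements $|a_n|$ are in general neither mutually disjoint nor order bounded, so one must exploit the disjointness of the functionals $|f_n|$ to carve out disjoint components $v_n\le|a_n|$ on which the $|f_n|$ stay bounded below (the lower bound $\|v_n\|\ge\delta/(2\sup_n\|f_n\|)$ then follows from $|f_n|(v_n)\ge\delta/2$). Everything else is routine: the order-continuity reduction together with the machinery already assembled for \autoref{T2 ess} makes the ``if'' direction immediate, and the ``only if'' direction is driven entirely by the passage relatively weakly compact $\Rightarrow$ L-weakly compact, which is exactly where the positive Schur property is used.
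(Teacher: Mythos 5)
Your overall architecture (positive Schur property $\Rightarrow$ order continuous norm, then identify almost limited operators into $F$ with L-weakly compact ones) is sound and runs parallel to the paper, which handles both directions by citation: the ``if'' part via Theorem \ref{T2'} (i.e.\ Theorem 4.2 of \cite{Mach} plus Theorem 5.61 of \cite{AB3}), the ``only if'' part via Theorem 3.4 of \cite{CW} followed by Theorem 4.2 of \cite{Mach}. Your order-continuity reduction and your ``if'' direction are correct (and making the reduction explicit is a genuine improvement, since Theorem \ref{T2'} requires $F$ to be $\sigma$-Dedekind complete, which the Proposition does not assume). The problems are both in the ``only if'' direction. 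First, the claim that order continuity of the norm makes $\mathrm{sol}(A)$ relatively weakly compact is false: in $F=\left( \bigoplus_{n}L^{1}[0,1]\right)_{c_{0}}$, which has order continuous norm, the sequence $x_{n}=(r_{n},\dots ,r_{n},0,\dots )$ ($n$ copies of the $n$-th Rademacher function) is weakly null, so $A=\{x_{n}\}\cup \{0\}$ is weakly compact, yet $|x_{n}|=(\mathbf{1},\dots ,\mathbf{1},0,\dots )\in \mathrm{sol}(A)$ is increasing with no supremum in $F$, hence has no weakly convergent subsequence. Fortunately the claim is also unnecessary: Theorem 4.34 of \cite{AB3} states directly that every disjoint sequence in the solid hull of a relatively weakly compact set is weakly null, in any Banach lattice. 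With that substitution, your deduction ``weakly compact $\Rightarrow$ L-weakly compact under the positive Schur property'' becomes correct, and is precisely Theorem 3.4 of \cite{CW}, which the paper cites.

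Second, and this is the genuine gap: your final step, ``L-weakly compact $\Rightarrow$ almost limited,'' is asserted rather than proved. The ``disjointification argument'' you appeal to --- producing pairwise disjoint $v_{n}$ with $0\leq v_{n}\leq |a_{n}|$ and $|f_{n}|(v_{n})\geq \delta /2$ --- is exactly the nontrivial content of the Dodds--Fremlin disjoint-sequence theorem (Theorem 4.36 of \cite{AB3}). Since the elements $|a_{n}|$ are neither pairwise disjoint nor commonly order bounded, disjointness of the functionals $|f_{n}|$ only yields approximate decompositions of each $|a_{n}|$ relative to each pair of functionals separately; assembling these into a single sequence of mutually disjoint components bounded below in norm requires the inductive construction in that theorem's proof, and no routine carving accomplishes it. The economical repair is to quote the standard characterization of L-weak compactness: $A$ is L-weakly compact if and only if $\sup_{x\in A}|f_{n}(x)|\rightarrow 0$ for every disjoint norm bounded sequence $(f_{n})\subset F^{\ast }$ (Proposition 3.6.3 of \cite{MN}, or Theorem 5.63 of \cite{AB3}); since weak$^{\ast }$ null sequences are norm bounded, every L-weakly compact operator is then almost limited with no further work --- which is in effect what the paper does by citing Theorem 4.2 of \cite{Mach}.
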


\begin{proof}
The \textquotedblleft if\textquotedblright\ part follows from Theorem \ref%
{T2'}. For the \textquotedblleft only if\textquotedblright\ part, assume
that $T:X\rightarrow F$ is weakly compact. It follows from Theorem 3.4 of 
\cite{CW} that $T$ is L-weakly compact, and hence $T$ is almost limited \cite%
[Theorem 4.2]{Mach}.
\end{proof}

\end{document}